\newtheorem{theorem}{Theorem}
\newtheorem{corollary}{Corollary}[section]
\begin{document}
\title{Wasserstein Statistics in One-dimensional Location-Scale Model}
\author{Shun-ichi Amari and Takeru Matsuda\\
RIKEN Center for Brain Science
}
\date{}
\maketitle

\begin{abstract}
Wasserstein geometry and information geometry are two important structures to be introduced in a manifold of probability distributions. Wasserstein geometry is defined by using the transportation cost between two distributions, so it reflects the metric of the base manifold on which the distributions are defined. Information geometry is defined to be invariant under reversible transformations of the base space.  Both have their own merits for applications. In particular, statistical inference is based upon information geometry, where the Fisher metric plays a fundamental role, whereas Wasserstein geometry is useful in computer vision and AI applications. In this study, we analyze statistical inference based on the Wasserstein geometry in the case that the base space is one-dimensional. By using the location-scale model, we further derive the $W$-estimator that explicitly minimizes the transportation cost from the empirical distribution to a statistical model and study its asymptotic behaviors. We show that the $W$-estimator is consistent and explicitly give its asymptotic distribution by using the functional delta method. The $W$-estimator is Fisher efficient in the Gaussian case. 
\end{abstract}

\section{Introduction}

Wasserstein geometry defines a divergence between two probability distributions $p(x)$ and $q(x)$, $x \in X$ by using the cost of transportation from $p$ to $q$.  Hence, it reflects the metric of the underlying manifold $X$ on which the probability distributions are defined.  Information geometry, on the hand, studies an invariant structures wherein the geometry does not change under transformations of $X$ which may change the distance within $X$.  So information geometry is constructed independently of the metric of $X$.

Both geometries have their own histories \citep[see e.g.,][]{Villani2003,Villani2009,Amari2016}.  Information geometry has been successful in elucidating statistical inference, where the Fisher information metric plays a fundamental role.  It has successfully been applied to, not only statistics, but also machine learning, signal processing, systems theory, physics, and many other fields \citep*{Amari2016}.  Wasserstein geometry has been a useful tool in geometry, where the Ricci flow has played an important role \citep*{Villani2009, LM2018}.  Recently, it has found a widened scope of applications in computer vision, deep learning, etc. \citep[e.g.,][]{FZMAP2015, ACB2917, MMC2015, PC2019}.  There have been attempts to connect the two geometries (see \citet{AKO2018, AKOC2019} and \citet{WL2019} for examples), and  \citet{LZ2019} has proposed a unified theory connecting them.  

It is natural to consider statistical inference from the Wasserstein geometry point of view \citep{LZ2019} and compare its results with information-geometrical inference based on the likelihood.  The present article studies the statistical inference based on the Wasserstein geometry from a point of view different from that of \citet{LZ2019}.  Given a number of independent observations from a probability distribution belonging to a statistical model with a finite number of parameters, we define the $W$-estimator that minimizes the transportation cost from the empirical distribution $\hat{p}(x)$ derived from observed data to the statistical model. This is the approach taken in many studies \citep[see e.g.,][]{BJGR2019,BBR2006}. In contrast, the information geometry estimator is the one that minimizes the Kullback--Leibler divergence from the empirical distribution to the model, and it is the maximum likelihood estimator.
Note that \cite{Matsuda2021} investigated predictive density estimation under the Wasserstein loss.

We use a one-dimensional (1D) base space $X={\bm{R}}^1$, and define the transportation cost equal to the square of the Euclidean distance between two points in ${\bm{R}}^1$. We give an equation for the $W$-estimator $\hat{{\bm{\theta}}}$ for a statistical model $S=\{ p(x, {\bm{\theta}}) \}$, where $p(x,{\bm{\theta}})$ is the probability density of $x$ parametrized by a vector parameter ${\bm{\theta}}$. We then focus on the location-scale model to obtain explicit solutions 
of the $W$-estimator. We analyze its behavior, proving that it is consistent and furthermore derives its asymptotic distribution. The $W$-estimator is not Fisher efficient except for the Gaussian case, but it minimizes the $W$-divergence, which is the transportation cost between the empirical distribution and the model. We may say that it is $W$-efficient in this sense.

The present $W$-estimator is different from the estimator of \citet*{LZ2019}, which is based on the Wasserstein score function. While their fundamental theory is a new paradigm connecting information geometry and Wasserstein geometry, their estimator does not minimize the $W$-divergence from the empirical one to the model.  It is an interesting problem to compare these two frameworks of Wasserstein statistics.

The present paper is organized as follows. In section 2, we introduce the $W$-estimator for a general parametric statistical model in the 1D-case. We show that the $W$-estimator uses only a linear function of the observations. In section 3, we then focus on the location-scale model.  We give an explicit form of the $W$-estimator. In section 4, we analyze the asymptotic behavior of the $W$-estimator, proving that it is Fisher efficient in the Gaussian case. We study the geometry of the location-scale model in section 5, showing that it is Euclidean \citep*{LZ2019}, although it is a curved submanifold in the function space of $W$-geometry \citep{Takatsu2011}. Finally, we prove that the maximum likelihood estimator asymptotically minimizes the transportation cost from the true distribution to the estimated one.

\section{$W$-estimator}

First, we show the optimal transportation cost of sending $p(x)$ to $q(x)$, $x \in {\bm{R}}^1$ when the transportation cost from $x$ to $y$ is $(x-y)^2$, where $x, y \in {\bm{R}}^1$.  Let $P(x)$ and $Q(x)$ be the cumulative distribution functions of $p$ and $q$, respectively, defined by
\begin{align*}
P(x) = \int^x_{-\infty} p(y) dy, \quad Q(x) = \int^x_{-\infty} q(y) dy.
\end{align*}
Then, it is known \citep{Santambrogio2015,PC2019} that the optimal transportation plan is to send mass of $p(x)$ at $x$ to $x'$ in a way that satisfies
\begin{align*}
P(x) = Q \left(x' \right).
\end{align*}
\begin{figure}
	\centering
	\includegraphics[width=8cm]{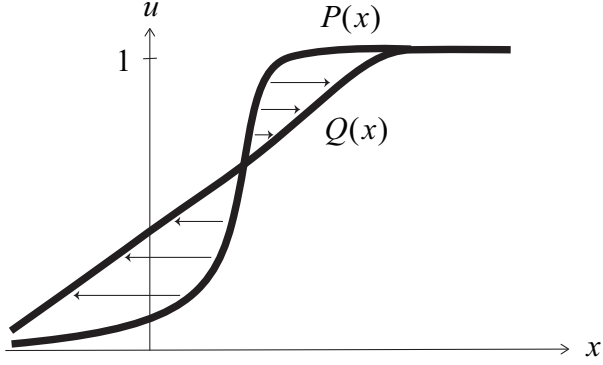}
	\caption{Optimal transportation plan from $p$ to $q$}
	\label{fig1}
\end{figure}
See Fig.~\ref{fig1}.  Thus, the total cost sending $p$ to $q$ is
\begin{align}
\label{eq:am420200228}
C(p, q) = \int^1_0 \left|
P^{-1}(u) - Q^{-1}(u)
\right|^2 du,
\end{align}
where $P^{-1}$ and $Q^{-1}$ are the inverse functions of $P$ and $Q$, respectively.

We consider a regular statistical model
\begin{align*}
S = \left\{ p(x, {\bm{\theta}}) \right\},
\end{align*}
parametrized by a vector parameter ${\bm{\theta}}$, where $p(x, {\bm{\theta}})$ is a probability density function of a random variable $x \in {\bm{R}}^1$ with respect to the Lebesgue measure of ${\bm{R}}^1$.  Let
\begin{align*}
D = \left\{ x_1, \cdots, x_n \right\}
\end{align*}
be $n$ independent samples from $p(x, {\bm{\theta}})$. We denote the empirical distribution by
\begin{align*}
\hat{p}(x) = \frac 1n
\sum_i \delta \left(x-x_i \right),
\end{align*}
where $\delta$ is the Dirac delta function. We rearrange $x_1, \cdots, x_n$ in the increasing order,
\begin{align*}
x_{(1)} \le x_{(2)} \le \cdots 
\le x_{(n)},
\end{align*}    
which are order statistics.  

The optimal transportation plan from $\hat{p}(x)$ to $p(x, {\bm{\theta}})$ is explicitly solved when $x$ is one-dimensional, $x \in {\bm{R}}^1$.  
The optimal plan is to transport mass at $x$ to those points $x’$ satisfying
\begin{align*}
\hat{P}(x_-) \leq P (x', {\bm{\theta}} ) \leq \hat{P}(x),
\end{align*}
where $\hat{P}(x)$ and $P(x, {\bm{\theta}})$ are the (right-continuous) cumulative distribution functions of $\hat{p}(x)$ and $p(x, {\bm{\theta}})$, respectively:
\begin{align*}
\hat{P}({{x}}) = \int^x_{-\infty}
\hat{p}(y) dy, \quad P(x, {\bm{\theta}}) = \int^x_{-\infty}
p(y, {\bm{\theta}}) dy,
\end{align*}
and $\hat{P}(x_-)=\lim_{y \to x-0} \hat{P}(y)$.
The total cost $C$ of optimally transporting $\hat{p}(x)$ to $p(x, {\bm{\theta}})$ is given by
\begin{align*}
C({\bm{\theta}}) = \int^1_0
\left|  
\hat{P}^{-1}(u)-P^{-1}(u, {\bm{\theta}})
\right|^2 du,
\end{align*}
where $\hat{P}^{-1}$ and $P^{-1}$ are inverse functions of $\hat{P}$ and $P$, respectively.
Note that
\begin{align*}
\hat{P}^{-1}(u) = \inf \{ y \mid P(y) \geq u \}.
\end{align*}

Let $z_0(\bm{\theta}), z_1(\bm{\theta}), \cdots, z_n(\bm{\theta})$ be the points of the equi-probability partition of the distribution $p(x, {\bm{\theta}})$ such that
\begin{align}
\label{eq:am1320200228}
\int^{z_i(\bm{\theta})}_{z_{i-1}(\bm{\theta})}
p(x, {\bm{\theta}}) dx =
\frac 1n,
\end{align}
where $z_0(\bm{\theta})=-\infty$ and $z_n(\bm{\theta})=\infty$.  In terms of the cumulative distribution, $z_i(\bm{\theta})$ can be written as
\begin{align*}
P \left(z_i(\bm{\theta}), {\bm{\theta}} \right)
= \frac in
\end{align*}
and
\begin{align*}
z_i(\bm{\theta}) = P^{-1} 
\left( \frac in, {\bm{\theta}} \right).
\end{align*}
See Fig.~\ref{fig2}.
\begin{figure}
	\centering
	\includegraphics[width=8cm]{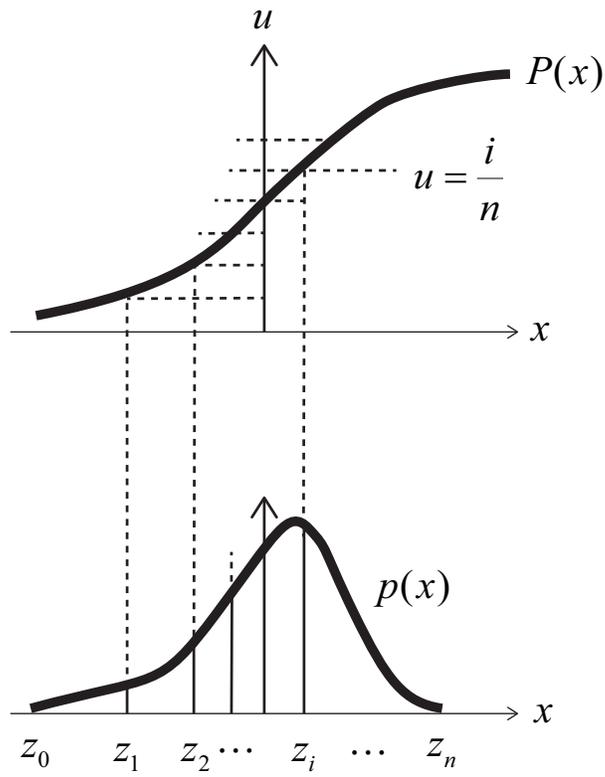}
	\caption{Equi-partition points $z_0,z_1,\dots,z_n$ of $p(x)$}
	\label{fig2}
\end{figure}

The optimal transportation cost is rewritten as
\begin{align*}
C({\bm{\theta}}) &= \sum_i
\int^{z_i(\bm{\theta})}_{z_{i-1}(\bm{\theta})}(x_{(i)}-y)^2
p(y, {\bm{\theta}})dy = \frac 1n \sum_i x^2_{(i)}-2 \sum_i k_i
({\bm{\theta}}) x_{(i)} + S({\bm{\theta}}),
\end{align*}
where we have used (\ref{eq:am1320200228}) and put
\begin{align}
\label{eq:am1820200225}
k_i({\bm{\theta}}) &=
\int^{z_i(\bm{\theta})}_{z_{i-1}(\bm{\theta})} y p 
(y, {\bm{\theta}})dy, \\
S({\bm{\theta}}) &=
\sum_i \int^{z_i(\bm{\theta})}_{z_{i-1}(\bm{\theta})} y^2 p
(y, {\bm{\theta}})dy =
\int_{-\infty}^{\infty} y^2 p(y, {\bm{\theta}})dy. \nonumber
\end{align} 
By using the mean and variance of $p(x, {\bm{\theta}})$, 
\begin{align*}
\mu({\bm{\theta}}) &= \int_{-\infty}^{\infty} y p(y, {\bm{\theta}}) dy, \\
\sigma^2({\bm{\theta}}) &= \int_{-\infty}^{\infty} y^2 p(y, {\bm{\theta}}) dy-\mu({\bm{\theta}})^2,
\end{align*}
we have
\begin{align*}
S({\bm{\theta}}) = \mu({\bm{\theta}})^2 + \sigma^2({\bm{\theta}}).
\end{align*}

The $W$-estimator $\hat{\bm{\theta}}$ is the minimizer of $C({\bm{\theta}})$.  Differentiating $C({\bm{\theta}})$ with respect to ${\bm{\theta}}$ and putting it equal to 0, we obtain the estimating equation as follows.

\begin{theorem}\upshape
	The $W$-estimator $\hat{\bm{\theta}}$ satisfies
	\begin{align}
	\label{eq:am2320200225}
	\frac{\partial}{\partial {\bm{\theta}}} \sum_i k_i ({\bm{\theta}}) x_{(i)} =
	\frac 12 \frac{\partial}{\partial {\bm{\theta}}} S({\bm{\theta}}).
	\end{align}
\end{theorem}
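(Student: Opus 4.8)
The plan is to read the estimating equation straight off the closed-form expression for the transportation cost obtained just above the statement, namely
\begin{align*}
C(\bm{\theta}) = \frac{1}{n}\sum_i x_{(i)}^2 - 2\sum_i k_i(\bm{\theta})\, x_{(i)} + S(\bm{\theta}),
\end{align*}
and then impose first-order stationarity $\partial C/\partial\bm{\theta}=0$ at the minimizer $\hat{\bm{\theta}}$. The term $\frac1n\sum_i x_{(i)}^2$ depends only on the data and not on $\bm{\theta}$, so it disappears under differentiation; what remains is $-2\,\partial_{\bm{\theta}}\sum_i k_i(\bm{\theta})x_{(i)} + \partial_{\bm{\theta}}S(\bm{\theta}) = 0$, which rearranges to (\ref{eq:am2320200225}). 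Thus the real content is in justifying that the displayed closed form is both correct and differentiable in $\bm{\theta}$.

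First I would re-derive the closed form carefully: starting from $C(\bm{\theta}) = \sum_i \int_{z_{i-1}(\bm{\theta})}^{z_i(\bm{\theta})} (x_{(i)}-y)^2 p(y,\bm{\theta})\,dy$, expand the square; the constant term uses the equi-probability property (\ref{eq:am1320200228}), $\int_{z_{i-1}(\bm{\theta})}^{z_i(\bm{\theta})} p(y,\bm{\theta})\,dy = 1/n$; the cross term is $-2\sum_i k_i(\bm{\theta})x_{(i)}$ by the definition (\ref{eq:am1820200225}) of $k_i$; and the quadratic term telescopes over the partition to $S(\bm{\theta})=\int y^2 p(y,\bm{\theta})\,dy$, which does not see the partition points at all. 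It is worth stressing that working with this decomposed form is exactly what makes the differentiation painless: if one instead differentiated $\sum_i\int_{z_{i-1}}^{z_i}(x_{(i)}-y)^2 p\,dy$ termwise, the moving endpoints $z_i(\bm{\theta})$ would generate boundary contributions proportional to $\big[(x_{(i)}-z_i)^2 - (x_{(i+1)}-z_i)^2\big]p(z_i,\bm{\theta})\,\partial_{\bm{\theta}} z_i(\bm{\theta})$, which do not cancel pointwise; in the decomposed form these are already absorbed, since the partition enters only through the constraint $\int p = 1/n$ whose derivative vanishes.

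Next I would invoke the regularity of the model $S$ to interchange $\partial/\partial\bm{\theta}$ with the finite sum and with the integrals defining $k_i(\bm{\theta})$ and $S(\bm{\theta})$: it suffices, for instance, that $p(\cdot,\bm{\theta})$ be $C^1$ in $\bm{\theta}$ with $\int (1+y^2)\,|\partial_{\bm{\theta}} p(y,\bm{\theta})|\,dy$ locally bounded in $\bm{\theta}$, and that $P(\cdot,\bm{\theta})$ be strictly increasing, so that $z_i(\bm{\theta})=P^{-1}(i/n,\bm{\theta})$ is well defined and $C^1$ by the implicit function theorem. Under these conditions $C(\bm{\theta})$ is differentiable, and since $\hat{\bm{\theta}}$ is by definition an interior minimizer of $C$, Fermat's rule yields $\partial C(\hat{\bm{\theta}})/\partial\bm{\theta}=0$, i.e.\ (\ref{eq:am2320200225}).

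The main obstacle here is not analytic depth but bookkeeping: verifying that the endpoint terms genuinely drop out (handled by using the decomposed form above rather than the raw integral) and recording the minimal smoothness and integrability hypotheses on the model under which the interchanges are legitimate. I would also add a sentence noting that (\ref{eq:am2320200225}) is only the necessary stationarity condition; showing that $\hat{\bm{\theta}}$ is the \emph{global} minimizer of $C(\bm{\theta})$, rather than merely a critical point, is a separate issue that is naturally deferred to the next section, where $C(\bm{\theta})$ becomes fully explicit for the location-scale model.
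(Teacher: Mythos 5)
Your proposal is correct and follows essentially the same route as the paper: differentiate the decomposition $C(\bm{\theta})=\frac1n\sum_i x_{(i)}^2-2\sum_i k_i(\bm{\theta})x_{(i)}+S(\bm{\theta})$ and set the gradient to zero, with the data-only term dropping out. The extra remarks on endpoint terms and regularity conditions are sound but go beyond what the paper records; the substance matches.
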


It is interesting to see that the estimating equation is linear in $n$ observations $x_{(1)}, \cdots, x_{(n)}$ for any statistical model.  This is quite different from the maximum likelihood estimator or Bayes estimator.

Here, we will give a rough sketch showing that the $W$-estimator is consistent; that is, it converges to the true ${\bm{\theta}}_0$ as $n$ tends to infinity \citep[see][]{BBR2006}. More detailed discussions are given for the location-scale model in the next section.  As $n$ tends to infinity, the order statistic $x_{(i)}$ converges to the $i$th partition point $z_i ({\bm{\theta}}_0)$, when the true parameter is ${\bm{\theta}}_0$.  From (\ref{eq:am1820200225}), we see that
\begin{align*}
k_i({\bm{\theta}}) \approx \frac 1n z_i({\bm{\theta}})
\end{align*}
as $n \rightarrow \infty$, so we have 
\begin{align*}
\sum_i k_i({\bm{\theta}}) x_{(i)} \approx \frac{1}{n} \sum_i z_i(\bm{\theta}) z_i(\bm{\theta}_0).
\end{align*}
Moreover, as $n$ tends to infinity,
\begin{align*}
S({\bm{\theta}})= \int_{-\infty}^{\infty} z^2 p(z, {\bm{\theta}}) dz \approx \frac 1n \sum_i z^2_i (\bm{\theta}).
\end{align*}
Therefore, ${\bm{\theta}}={\bm{\theta}}_0$ is the solution of (\ref{eq:am2320200225}), showing the consistency of the estimator.

{\textbf{Remark}}\quad \cite{BBR2006} investigated existence, measurability and consistency of the $W$-estimator for general models and \cite{BJGR2019} extended this result to mis-specified models.
\cite{MMC2015} studied $W$-estimators for Boltzmann machines.
In this study, we focus on the one-dimensional models, for which Theorem~1 gives a closed-form solution of the $W$-estimator.

\section{$W$-estimator in location-scale model}
Now, we focus on location-scale models.
Let $f(z)$ be a standard probability density function, satisfying
\begin{align*}
\int_{-\infty}^{\infty} f(z) dz &= 1, \\
\int_{-\infty}^{\infty} zf(z) dz &= 0, \\
\int_{-\infty}^{\infty} z^2 f(z) dz &= 1,
\end{align*}
that is, its mean is 0 and the variance is 1. The location-scale model $p(x, {\bm{\theta}})$ is written as
\begin{align}
\label{eq:am3120200225}
p(x, {\bm{\theta}}) =
\frac 1{\sigma} f
\left( \frac{x-\mu}{\sigma} \right),
\end{align}
where ${\bm{\theta}}=(\mu, \sigma)$ is a parameter for specifying the distribution.

We define the equi-probability partition points $z_i$ for the standard $f(z)$ as
\begin{align*}
z_i = F^{-1} \left( \frac in \right),
\end{align*}
where $F$ is the cumulative distribution function
\begin{align*}
F(z) = \int^z_{-\infty} f(x)dx.
\end{align*}

We use the following transformation of the location and scale,
\begin{align*}
z &= \frac{x-\mu}{\sigma}, \\
x &= \sigma z+ \mu.
\end{align*}
The equi-probability partition points $y_i=y_i(\bm{\theta})$ of $p(x, {\bm{\theta}})$ are given by
\begin{align*}
y_i = \sigma z_i + \mu.
\end{align*}
The cost of the optimal transport from the empirical distribution $\hat{p}(x)$ to $p(x, \bm{\theta})$ is then written as
\begin{align}
C(\mu, \sigma) &=  \sum_i
\int^{y_i}_{y_{i-1}}
\left(x_{(i)}-x \right)^2 p
(x, \mu, \sigma) dx \nonumber\\
\label{eq:am3720200225}
&=  
\mu^2+\sigma^2 + \frac{1}{n} \sum_i x^2_{(i)} -2 \sum_i
x_{(i)} \int^{z_i}_{z_{i-1}} \left(\sigma z+ \mu \right)
f(z)dz.
\end{align}
By differentiating (\ref{eq:am3720200225}), we obtain
\begin{align*}
\frac 12 \frac{\partial}{\partial \mu} C
&= \mu-\frac 1n \sum_i x_{(i)}, \\
\frac 12 \frac{\partial}{\partial \sigma}C
&= \sigma-\sum_i k_i x_{(i)},
\end{align*}
where
\begin{align}
\label{eq:am4020200225}
k_i = \int^{z_i}_{z_{i-1}} z f(z)dz,
\end{align}
which does not depend on $\mu$ or $\sigma$ and depends only on the shape of $f$.  By putting the derivatives equal to 0, we obtain the following theorem.

\begin{theorem}\label{th_West}\upshape
	The $W$-estimator of a location-scale model is given by
	\begin{align}
	\label{eq:am4120200225}
	\hat{\mu} &= \frac 1n \sum_i x_{(i)}, \\
	\label{eq:am4220200225}
	\hat{\sigma} &= \sum_i k_i x_{(i)}.
	\end{align}
\end{theorem}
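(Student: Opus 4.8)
The plan is to reduce the transportation cost to an explicit elementary function of $(\mu,\sigma)$ and then minimize it directly. Starting from the optimal-transport decomposition
\begin{align*}
C(\mu,\sigma)=\sum_i\int_{y_{i-1}}^{y_i}\left(x_{(i)}-x\right)^2 p(x,\mu,\sigma)\,dx
\end{align*}
recorded in Section~2 — the monotone coupling sends the atom at $x_{(i)}$, of mass $1/n$, onto the quantile interval $[y_{i-1},y_i]$ of $p(\cdot,\mu,\sigma)$, and this assignment is the same for every $\theta$ — I would substitute $z=(x-\mu)/\sigma$. Because $y_i=\sigma z_i+\mu$ with $z_i=F^{-1}(i/n)$ fixed, the limits become the constants $z_{i-1},z_i$ and $p(x,\mu,\sigma)\,dx=f(z)\,dz$, so
\begin{align*}
C(\mu,\sigma)=\sum_i\int_{z_{i-1}}^{z_i}\left(x_{(i)}-\sigma z-\mu\right)^2 f(z)\,dz .
\end{align*}
The point of the change of variables is that all dependence on $(\mu,\sigma)$ now lives in the integrand, which is polynomial in the parameters, so $C$ is smooth and can be treated by elementary algebra.

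I would then expand the square and evaluate the three integrals over each $[z_{i-1},z_i]$ using $\int_{z_{i-1}}^{z_i}f(z)\,dz=1/n$ and $\int_{z_{i-1}}^{z_i}zf(z)\,dz=k_i$ from \eqref{eq:am4020200225}, together with $\sum_i k_i=0$ and $\sum_i\int_{z_{i-1}}^{z_i}z^2 f(z)\,dz=1$. This reproduces \eqref{eq:am3720200225}, and completing the square in $\mu$ and in $\sigma$ separately gives
\begin{align*}
C(\mu,\sigma)=\left(\mu-\frac1n\sum_i x_{(i)}\right)^2+\left(\sigma-\sum_i k_i x_{(i)}\right)^2+R,
\end{align*}
where $R=\frac1n\sum_i x_{(i)}^2-\big(\tfrac1n\sum_i x_{(i)}\big)^2-\big(\sum_i k_i x_{(i)}\big)^2$ does not involve $(\mu,\sigma)$. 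The minimizer can now be read off directly: $\hat\mu=\frac1n\sum_i x_{(i)}$ and $\hat\sigma=\sum_i k_i x_{(i)}$, which is precisely the vanishing of the two partial derivatives displayed just before the theorem, that is, the specialization of the general estimating equation \eqref{eq:am2320200225} to the location-scale model.

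Finally I would confirm that this stationary point is the true minimizer over the admissible set $\sigma>0$. The completed-square expression exhibits $C$ as a strictly convex quadratic in $(\mu,\sigma)$ with a unique critical point, so it is enough to verify that the critical point satisfies $\hat\sigma>0$. Using $\sum_i k_i=0$ we write $\hat\sigma=\sum_i k_i x_{(i)}=\sum_i k_i\big(x_{(i)}-\hat\mu\big)$; since $k_i$ equals $1/n$ times the conditional mean of $z$ over the interval $[z_{i-1},z_i]$ and these intervals move to the right as $i$ increases, $k_i$ is nondecreasing in $i$, while $x_{(i)}$ is nondecreasing by construction. Chebyshev's sum inequality then gives $\hat\sigma\ge0$, strictly unless all the $x_{(i)}$ coincide, so for non-degenerate data the $W$-estimator lies in the interior of the parameter space and realizes the global minimum of $C$.

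This is essentially a computation, so I expect no deep obstacle. The two points that need a little care are (i) that the monotone coupling, hence the closed form for $C(\mu,\sigma)$, holds uniformly in $\theta$ with no change in combinatorial structure — exactly the one-dimensional optimal transport fact quoted in Section~2 — and (ii) the positivity of $\hat\sigma$, which is settled by the rearrangement inequality above.
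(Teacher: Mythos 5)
Your proposal is correct and follows essentially the same route as the paper: both reduce the transport cost to the explicit quadratic form \eqref{eq:am3720200225} via the standardization $z=(x-\mu)/\sigma$ and then minimize it, the paper by setting the partial derivatives to zero and you by completing the square, which is the same computation. Your additional verification that $\hat\sigma\ge 0$ (via $\sum_i k_i=0$ and the Chebyshev/rearrangement argument with the monotone sequences $k_i$ and $x_{(i)}$) is a sound extra point that the paper does not address, confirming the stationary point lies in the admissible region $\sigma>0$ for non-degenerate data.
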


{\textbf{Remark}}\quad The $W$-estimator of the location parameter $\mu$ is the arithmetic mean of the observed data irrespective of the form of $f$.  The $W$-estimator of the scale parameter $\sigma$ is also a linear function of the observed data $x_{(1)}, \cdots, x_{(n)}$, but it depends on $f$ through $k_i$.


\section{Asymptotic distribution of $W$-estimator}
Here, we derive the asymptotic distribution of the $W$-estimator in location-scale models.
Our derivation is based on the fact that the $W$-estimator has the form of L-statistics \citep{vV}, which is a linear combination of order statistics.

\begin{theorem}
	The asymptotic distribution of the $W$-estimator ($\hat{\mu},\hat{\sigma})$ in \eqref{eq:am4120200225} \eqref{eq:am4220200225} is 
	\begin{align}\label{asymp}
	\sqrt{n} \begin{pmatrix} \hat{\mu}-\mu \\ \hat{\sigma}-\sigma \end{pmatrix} \Rightarrow N \left( \begin{pmatrix} 0 \\ 0 \end{pmatrix}, \begin{pmatrix} \sigma^2 & \frac{1}{2} m_3 \sigma^2 \\ \frac{1}{2} m_3 \sigma^2 & \frac{1}{4} (m_4 - 1) \sigma^2 \end{pmatrix} \right),
	\end{align}
	where
	\begin{align*}
	m_4 = \int_{-\infty}^{\infty} z^4 f(z) dz, \quad m_3 = \int_{-\infty}^{\infty} z^3 f(z) dz,
	\end{align*}
	are the fourth and third moments of $f(z)$, respectively.
\end{theorem}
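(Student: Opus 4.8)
The plan is to recognize both components of the $W$-estimator as L-statistics and to express them as a single linear functional of the empirical quantile function, after which the functional delta method applies. Since $z_i=F^{-1}(i/n)$, the substitution $v=F(z)$ turns the weights \eqref{eq:am4020200225} into $k_i=\int_{(i-1)/n}^{i/n}F^{-1}(v)\,dv$, and because $\hat P^{-1}(v)=x_{(i)}$ for $v\in((i-1)/n,i/n]$, Theorem~\ref{th_West} becomes the exact identities
\begin{align*}
\hat\mu=\int_0^1\hat P^{-1}(v)\,dv,\qquad \hat\sigma=\int_0^1F^{-1}(v)\,\hat P^{-1}(v)\,dv .
\end{align*}
Thus $(\hat\mu,\hat\sigma)=\Psi(\hat P)$, where $\Psi(G)=\bigl(\int_0^1G^{-1},\ \int_0^1F^{-1}G^{-1}\bigr)$ is \emph{linear} in the quantile function $G^{-1}$. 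Writing $U_i=P(x_i,\bm\theta)$ (i.i.d.\ uniform) and $\mathbb{G}_n$ for their empirical distribution, we also have $\hat P^{-1}(v)=\sigma F^{-1}(\mathbb{G}_n^{-1}(v))+\mu$, which reduces everything to the classical uniform empirical quantile process.

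First I would dispose of $\hat\mu$: it is the sample mean, so $\sqrt n(\hat\mu-\mu)\Rightarrow N(0,\sigma^2)$ by the CLT, its joint behaviour with $\hat\sigma$ emerging from the vector statement. For the joint limit I would invoke weak convergence of the uniform quantile process $\sqrt n(\mathbb{G}_n^{-1}-\mathrm{id})\Rightarrow\mathbb{B}$ (a Brownian bridge, in a suitable metric) and compose: $g\mapsto F^{-1}\circ g$ is Hadamard differentiable at $\mathrm{id}$ with derivative $h\mapsto (F^{-1})'\cdot h=h/(f\circ F^{-1})$, and $\Psi$ is a bounded linear map, hence its own derivative, so the delta method gives joint asymptotic normality of $\sqrt n(\hat\mu-\mu,\hat\sigma-\sigma)$. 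Equivalently --- and more convenient for bookkeeping --- one can use the classical CLT for L-statistics together with the Cram\'er--Wold device, since for any $a,b$ the combination $a\hat\mu+b\hat\sigma=\int_0^1(a+bF^{-1}(v))\hat P^{-1}(v)\,dv$ is again an L-statistic, whose asymptotic variance then pins down the full $2\times2$ covariance.

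The heart of the proof is the identification of the limiting variance. For an L-functional $T(G)=\int_0^1J(v)G^{-1}(v)\,dv$, the influence function at $G=P(\cdot,\bm\theta)$ is $\mathrm{IF}_T(x)=-\int_{-\infty}^{\infty}\bigl(\mathbf{1}[x\le y]-P(y,\bm\theta)\bigr)J(P(y,\bm\theta))\,dy$. For $\hat\mu$ ($J\equiv1$) this collapses, after splitting the integral at $0$, to $\mathrm{IF}_{\hat\mu}(x)=x-\mu$. For $\hat\sigma$ ($J=F^{-1}$), the substitution $y=\sigma z+\mu$ yields $\mathrm{IF}_{\hat\sigma}(x)=-\sigma\int_{-\infty}^{\infty}(\mathbf{1}[w\le z]-F(z))\,z\,dz$ with $w=(x-\mu)/\sigma$, and one integration by parts --- the boundary terms vanishing because $f$ has a finite second moment --- gives $\mathrm{IF}_{\hat\sigma}(x)=\tfrac{\sigma}{2}(w^2-1)$, using $\int z^2f=1$. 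The asymptotic covariance matrix is $E[\mathrm{IF}\,\mathrm{IF}^{\!\top}]$ with $w\sim f$, which is the elementary moment computation
\begin{align*}
E[\sigma^2w^2]=\sigma^2,\qquad E\!\left[\sigma w\cdot\tfrac{\sigma}{2}(w^2-1)\right]=\tfrac{\sigma^2}{2}m_3,\qquad E\!\left[\tfrac{\sigma^2}{4}(w^2-1)^2\right]=\tfrac{\sigma^2}{4}(m_4-1),
\end{align*}
using $\int zf=0$ and $\int z^2f=1$; this is exactly \eqref{asymp}. (As sanity checks, $\mathrm{IF}_{\hat\sigma}$ is the same as that of the sample standard deviation, and in the Gaussian case $m_3=0$, $m_4=3$ the matrix reduces to $\mathrm{diag}(\sigma^2,\sigma^2/2)$, the inverse Fisher information, consistent with the efficiency claim.)

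The main obstacle is analytic rather than algebraic. When $f$ is not compactly supported, $F^{-1}$ is unbounded, so $q\mapsto\int_0^1F^{-1}q$ is not continuous on $\ell^\infty(0,1)$ and the textbook quantile-process delta method does not apply verbatim; making the argument rigorous requires a weighted supremum norm (with weight vanishing at the endpoints at a rate matched to the growth of $F^{-1}$), or appeal to the dedicated central limit theorems for L-statistics (Stigler; Mason; van der Vaart). The moment hypotheses $m_3,m_4<\infty$ in the statement are precisely what make those theorems applicable and the limiting variance finite, and what justify the integration by parts above; once this functional-analytic step is secured, the remaining computations are routine.
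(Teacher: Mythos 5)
Your proposal is correct and follows essentially the same route as the paper: the same L-statistic representation $\hat\mu=\int_0^1\hat P^{-1}(v)\,dv$, $\hat\sigma=\int_0^1 F^{-1}(v)\hat P^{-1}(v)\,dv$ and the functional delta method, with your influence-function calculation being exactly the paper's Hadamard derivative $\phi_F'(H)=\left(-\int H(x)\,dx,\,-\int xH(x)\,dx\right)$ evaluated at $H=\mathbf{1}[x\le\cdot]-F$. The only differences are bookkeeping --- you obtain the covariance as ${\rm E}[\mathrm{IF}\,\mathrm{IF}^{\top}]$ via integration by parts, whereas the paper evaluates double integrals of the Brownian-bridge covariance --- plus your explicit caveat about unbounded $F^{-1}$ and weighted norms (or dedicated L-statistic CLTs), a regularity point the paper leaves implicit.
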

\begin{proof}
	Without loss of generality, we focus on the case $\mu=0$ and $\sigma=1$.
	Let
	\[
	\phi(\widetilde{F}) = \left( \int_0^1 \widetilde{F}^{-1}(u) {\rm d} u, \int_0^1 F^{-1}(u) \widetilde{F}^{-1}(u) {\rm d} u \right),
	\]
	where $F$ is the distribution function of $f$.
	Note that $\phi(F)=(0,1)$.
	Then, the $W$-estimator in \eqref{eq:am4120200225} \eqref{eq:am4220200225} is expressed as
	\[
	(\hat{\mu}, \hat{\sigma})=\phi(F_n),
	\]
	where $F_n$ is the empirical distribution of $x_1,\dots,x_n$, because
	\begin{align*}
	k_i = \int_{(i-1)/n}^{i/n} F^{-1}(u) {\rm d} u.
	\end{align*}
	
	To derive the asymptotic distribution of $\phi(F_n)$, we use the functional delta method \citep{vV}.
	From Donsker's theorem (Theorem 19.3 of \cite{vV}),
	\[
	\sqrt{n} (F_n-F) \Rightarrow \mathbb{G}_F = \mathbb{G} \circ F,
	\]
	where $\mathbb{G}$ is the standard Brownian bridge.
	Namely, $\mathbb{G}_F$ is the mean zero Gaussian process on $(-\infty,\infty)$ with covariance given by
	\[
	{\rm E} [\mathbb{G}_F (x) \mathbb{G}_F (y)] = F(x) \wedge F(y) - F(x)F(y),
	\]
	where $s \wedge t=\min (s,t)$.
	Let $u=F(x)$ and $x_t=(F+tH)^{-1}(u)$ for sufficiently small $t$. 
	Then, from $x_0=x$,
	\[
	u = F(x_t) + t H(x_t) = F(x) + f(x) (x_t-x) + t H(x) + O(t^2),
	\]
	which yields 
	\[
	x_t=x-t \frac{H(x)}{f(x)} +O(t^2).	
	\]
	Thus, by putting $u=F(x)$,
	\begin{align*}
	\int_0^1 (F+tH)^{-1}(u) {\rm d} u &= \int_0^1 \left( F^{-1}(u) -t \frac{H(F^{-1}(u))}{f(F^{-1}(u))} \right) {\rm d} u + O(t^2) \\
	&= \int_{-\infty}^{\infty} x f(x) {\rm d} x - t \int_{-\infty}^{\infty} H(x) {\rm d} x + O(t^2).
	\end{align*}
	Similarly,
	\begin{align*}
	\int_0^1 F^{-1}(u) (F+tH)^{-1}(u) {\rm d} u = \int_{-\infty}^{\infty} x^2 f(x) {\rm d} x - t \int_{-\infty}^{\infty} x H(x) {\rm d} x + O(t^2).
	\end{align*}
	Therefore, $\phi$ is Hadamard differentiable with derivative given by
	\[
	\phi_F'(H) = \lim_{t \to 0} \frac{\phi(F+tH)-\phi(F)}{t} = \left( -\int_{-\infty}^{\infty} H(x) {\rm d} x, -\int_{-\infty}^{\infty} x H(x) {\rm d} x \right).
	\]
	Thus, from Theorem 20.8 of \cite{vV},
	\[
	\sqrt{n} (\phi(F_n)-\phi(F)) \Rightarrow \phi_F'(\mathbb{G}_F) \sim {\rm N} (0,\Sigma),
	\]
	where
	\[
	\Sigma_{11} = \int_{-\infty}^{\infty} \int_{-\infty}^{\infty} (F(x) \wedge F(y) - F(x) F(y)) {\rm d} x {\rm d} y,
	\]
	\[
	\Sigma_{12} = \Sigma_{21} = \int_{-\infty}^{\infty} \int_{-\infty}^{\infty} x (F(x) \wedge F(y) - F(x) F(y)) {\rm d} x {\rm d} y,
	\]
	\[
	\Sigma_{22} = \int_{-\infty}^{\infty} \int_{-\infty}^{\infty} xy (F(x) \wedge F(y) - F(x) F(y)) {\rm d} x {\rm d} y.
	\]
	By using
	\begin{align*}
	\int_{-\infty}^y F(x) {\rm d} x &= \left[ (x-y)F(x) \right]_{x=-\infty}^{x=y} - \int_{-\infty}^y (x-y) f(x) {\rm d} x \\
	&= - \int_{-\infty}^y (x-y) f(x) {\rm d} x,
	\end{align*}
	\begin{align*}
	\int_x^{\infty} (y-x) (1-F(y)) {\rm d} y &= \left[ \frac{(y-x)^2}{2} (1-F(y)) \right]_{y=x}^{y=\infty} - \int_x^{\infty} \frac{(y-x)^2}{2} (-f(y)) {\rm d} y \\
	&= \int_{x}^{\infty} \frac{(y-x)^2}{2} f(y) {\rm d} y,
	\end{align*}
	and the symmetry of the integrand of $\Sigma_{11}$, we have
	\begin{align*}
	\Sigma_{11} &= 2 \int_{-\infty}^{\infty} \int_{-\infty}^{y} F(x) (1-F(y)) {\rm d} x {\rm d} y \\
	&= 2 \int_{-\infty}^{\infty} (1-F(y)) \int_{-\infty}^{y} F(x) {\rm d} x {\rm d} y \\
	&= -2 \int_{-\infty}^{\infty} (1-F(y)) \int_{-\infty}^y (x-y) f(x) {\rm d} x {\rm d} y \\
	&= 2 \int_{-\infty}^{\infty} f(x) \int_x^{\infty} (y-x) (1-F(y)) {\rm d} y {\rm d} x \\
	&= 2 \int_{-\infty}^{\infty} f(x) \int_{x}^{\infty} \frac{(y-x)^2}{2} f(y) {\rm d} y {\rm d} x \\
	&= \int_{-\infty}^{\infty} \int_{x}^{\infty} (x-y)^2 f(x) f(y) {\rm d} y {\rm d} x \\
	&= \frac{1}{2} \int_{-\infty}^{\infty} \int_{-\infty}^{\infty} (x-y)^2 f(x) f(y) {\rm d} y {\rm d} x. 
	\end{align*}
	Therefore, letting $X$ and $Y$ be independent samples from $f(z)$,
	\begin{align*}
	\Sigma_{11} &= \frac{1}{2} {\rm E} [(X-Y)^2] = m_2.
	\end{align*}
	A similar calculation yields
	\begin{align*}
	\Sigma_{12} = \Sigma_{21} = {\rm E} \left[ \frac{1}{3} X^3 - \frac{1}{2} X^2 Y + \frac{1}{6} Y^3 \right] = \frac{1}{2} m_3,
	\end{align*}
	\begin{align*}
	\Sigma_{22} = {\rm E} \left[ \frac{(X^2-Y^2)^2}{8} \right] = \frac{1}{4} (m_4-1).
	\end{align*}
	Hence, we obtain \eqref{asymp}.
\end{proof}

In particular, the $W$-estimator is Fisher efficient for the Gaussian model, but it is not efficient for other models.

\begin{corollary}
	For the Gaussian model, the asymptotic distribution of the $W$-estimator $(\hat{\mu},\hat{\sigma})$ is 
	\begin{align*}
	\sqrt{n} \begin{pmatrix} \hat{\mu}-\mu \\ \hat{\sigma}-\sigma \end{pmatrix} \to N \left( \begin{pmatrix} 0 \\ 0 \end{pmatrix}, \begin{pmatrix} \sigma^2 & 0 \\ 0 & \frac{1}{2} \sigma^2 \end{pmatrix} \right),
	\end{align*}
	which attains the Cramer--Rao bound.
\end{corollary}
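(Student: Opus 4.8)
The plan is to specialize Theorem~3 to the standard normal $f$ and then compare the resulting asymptotic covariance with the inverse Fisher information of the Gaussian location-scale family. First I would record the relevant moments of the standard normal: since $f$ is symmetric about $0$, the third moment vanishes, $m_3 = \int_{-\infty}^{\infty} z^3 f(z)\,dz = 0$, and by the usual recursion for Gaussian moments, $m_4 = \int_{-\infty}^{\infty} z^4 f(z)\,dz = 3$. Substituting into the covariance matrix of \eqref{asymp} gives $\Sigma_{11} = \sigma^2$, $\Sigma_{12} = \Sigma_{21} = \frac{1}{2} m_3 \sigma^2 = 0$, and $\Sigma_{22} = \frac{1}{4}(m_4 - 1)\sigma^2 = \frac{1}{2}\sigma^2$, which is exactly the claimed diagonal matrix $\mathrm{diag}(\sigma^2, \sigma^2/2)$.

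Second, I would verify the Cramér--Rao claim by computing the Fisher information matrix $G(\mu,\sigma)$ for the model $p(x,{\bm{\theta}}) = \sigma^{-1} f((x-\mu)/\sigma)$ with $f$ the standard normal density. Writing $\log p(x,{\bm{\theta}}) = -\log\sigma - (x-\mu)^2/(2\sigma^2) + \mathrm{const}$ and differentiating, the score components are $\partial_\mu \log p = (x-\mu)/\sigma^2$ and $\partial_\sigma \log p = -1/\sigma + (x-\mu)^2/\sigma^3$. Taking expectations of the products of scores, using $\mathrm{E}[(x-\mu)^2] = \sigma^2$, $\mathrm{E}[(x-\mu)^4] = 3\sigma^4$, and the vanishing of odd central moments, one obtains $G(\mu,\sigma) = \mathrm{diag}(1/\sigma^2,\, 2/\sigma^2)$, hence $G(\mu,\sigma)^{-1} = \mathrm{diag}(\sigma^2,\, \sigma^2/2)$. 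This coincides with the asymptotic covariance just computed, so the $W$-estimator is asymptotically efficient in the Gaussian case.

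There is essentially no obstacle here: the corollary is a direct evaluation of Theorem~3 at the Gaussian, combined with the textbook Fisher information of the normal family. The only point requiring a moment's care is the parametrization — the asymptotic distribution \eqref{asymp} is stated in terms of the standard deviation $\sigma$, not the variance $\sigma^2$, so the Fisher information must be computed in the same $(\mu,\sigma)$ coordinates. The entry $2/\sigma^2$ in the $\sigma\sigma$-component (as opposed to $1/(2\sigma^4)$, which would arise under the variance parametrization) is precisely what makes the two matrices agree.
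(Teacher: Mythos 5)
Your proposal is correct and follows essentially the same route as the paper: the paper's proof consists precisely of substituting $m_3=0$ and $m_4=3$ into Theorem~3, leaving the comparison with the Fisher information implicit. Your explicit computation of $G(\mu,\sigma)=\mathrm{diag}(1/\sigma^2,\,2/\sigma^2)$ in the $(\mu,\sigma)$ parametrization simply spells out the Cram\'er--Rao verification that the paper takes for granted.
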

\begin{proof}
	For the Gaussian model, we have $m_4=3$ and $m_3=0$.
\end{proof}

Figure~\ref{fig3} plots the ratio of the mean square error ${\rm E}[(\hat{\mu}-\mu)^2+(\hat{\sigma}-\sigma)^2]$ of the $W$-estimator to that of the MLE for the Gaussian model with respect to $n$.
The ratio converges to one as $n$ goes to infinity, which shows that the $W$-estimator has statistical efficiency.

\begin{figure}[htbp]
\centering
\begin{tikzpicture}
\tikzstyle{every node}=[]
\begin{axis}[width=7cm,
xmax=6,xmin=2,
ymax=1.008, ymin=0.999,
xlabel={$\log_{10} n$},
ylabel={MSE ratio},
      yticklabel style={
        /pgf/number format/.cd,
        fixed,
        zerofill,
        precision=3,
      },
ytick distance=0.002,
ylabel near ticks,
legend style={legend cell align=left,draw=none,fill=white,fill opacity=0.8,text opacity=1,},
	]
\addplot[very thick, color=black,
filter discard warning=false, unbounded coords=discard
] table {
    2.0000    1.0072
    3.0000    1.0014
    4.0000    1.0001
    5.0000    1.0000
    6.0000    1.0000
};
\addplot[thin, color=black, dotted,
filter discard warning=false, unbounded coords=discard
] table {
    2.0000    1.0000
    6.0000    1.0000
};
\end{axis}
\end{tikzpicture} 
	\caption{Ratio of mean square error of $W$-estimator to that of MLE for the Gaussian model.}
	\label{fig3}
\end{figure}
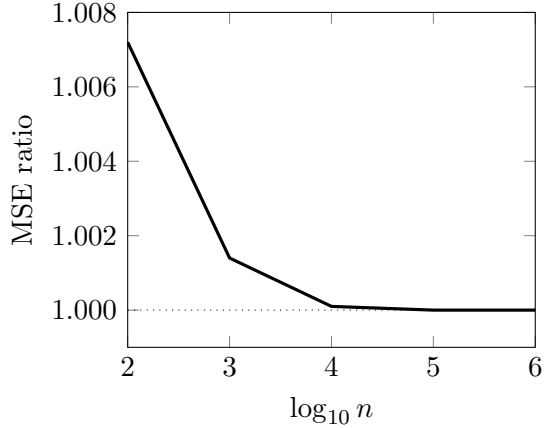

Figure~\ref{fig4} compares the mean square error of the $W$-estimator and MLE for the uniform model
\begin{align*}
f(z)= \begin{cases} \frac{1}{2 \sqrt{3}} & (-\sqrt{3} \leq z \leq \sqrt{3}) \\ 0 & (\mathrm{otherwise}) \end{cases}.
\end{align*}
In this case, the convergence rate of MLE is faster than $n^{-1/2}$, whereas the $W$-estimator is only $\sqrt{n}$-consistent.

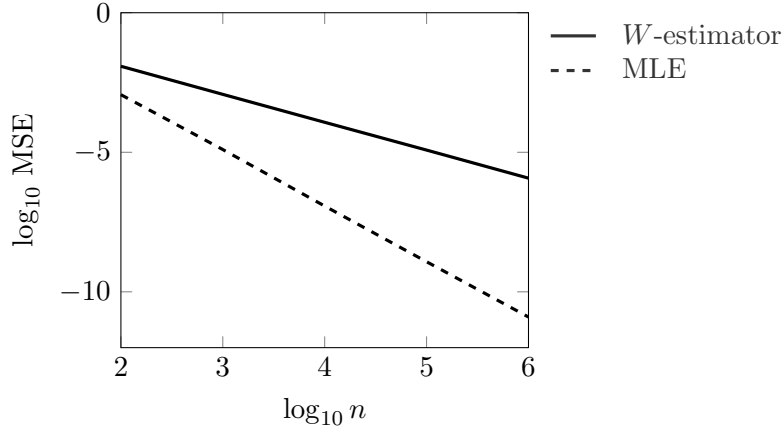
\begin{figure}[htbp]
\centering
\begin{tikzpicture}
\tikzstyle{every node}=[]
\begin{axis}[width=7cm,
xmax=6,xmin=2,
ymax=0, ymin = -12,
xlabel={$\log_{10} n$},ylabel={$\log_{10}$ MSE},
ylabel near ticks,
legend pos=outer north east,
legend entries={$W$-estimator,MLE},
legend style={legend cell align=left,draw=none,fill=white,fill opacity=0.8,text opacity=1,},
	]
\addplot[very thick, color=black,
filter discard warning=false, unbounded coords=discard
] table {
    2.0000   -1.9183
    3.0000   -2.9248
    4.0000   -3.9262
    5.0000   -4.9216
    6.0000   -5.9274
};
\addplot[very thick, dashed, color=black,
filter discard warning=false, unbounded coords=discard
] table {
    2.0000   -2.9360
    3.0000   -4.9022
    4.0000   -6.9269
    5.0000   -8.9223
    6.0000  -10.9029
};
\end{axis}
\end{tikzpicture} 
	\caption{Mean square error of $W$-estimator and MLE for the uniform model.}
	\label{fig4}
\end{figure}


\section{Riemannian structure of $W$-divergence}

Consider the manifold $M = \{p(x) \}$ of probability distributions which are absolutely continuous with respect to the Lebesgue measure and have finite second moments.  It is known that $M$ has a Riemannian structure due to the Wasserstein distance or the cost function. For two distributions $p(x)$ and $q(x)$, their optimal transportation cost, that is, the divergence between them, is given by (\ref{eq:am420200228}).

We calculate the optimal transportation cost between two nearby distributions $p(x)$ and $p(x)+ \delta p(x)$, where $\delta p(x)$ is infinitesimally small.  We have
\begin{align*}
\left(P + \delta P \right)^{-1} (u)
= P^{-1}(u)- 
\frac{\delta P \left\{ x(u) \right\}}{P' \left\{ x(u) \right\}},
\end{align*}
where
\begin{align*}
x(u) = P^{-1}(u).
\end{align*}
This equation is derived from
\begin{align*}
\frac {\rm d}{{\rm d}u} F^{-1}(u) =
\frac 1{f \left\{ x(u) \right\}},
\end{align*}
which comes from the differentiation of the identity
\begin{align*}
F^{-1} \left\{ F(x) \right\} = x.
\end{align*}
We thus have
\begin{align}
\label{eq:am6920200225}
C \left(p, p+\delta p \right) =
\int^{\infty}_{-\infty}
\frac 1{p(x)} \left(
\int^{x}_{-\infty} \delta p(y)dy
\right)^2 dx
\end{align}
which is a quadratic form of $\delta p(x)$.  This gives a Riemannian metric to $M$.

The location-scale model $S$ is a finite-dimensional submanifold embedded in $M$.  
For the location-scale model (\ref{eq:am3120200225}), we have
\begin{align*}
\delta p(y) = \frac{\partial}{\partial \mu}
p(y, {\bm{\theta}}) d \mu +
\frac{\partial}{\partial \sigma}
p(y, {\bm{\theta}}) d \sigma.
\end{align*}
The Riemannian metric tensor $G^W=\left(g^W_{ij}\right)$ is derived from
\begin{align*}
C (p, p+\delta p) = \sum g_{ij}^W
({\bm{\theta}}) d \theta_i d \theta_j.
\end{align*}
See also \citet*{LZ2019}.

\begin{theorem}\upshape
	The location-scale model is a Euclidean space, irrespective of $f$,
	\begin{align*}
	g_{ij}^W = \delta_{ij}.
	\end{align*}
\end{theorem}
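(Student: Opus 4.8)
The plan is to compute the three components $g^W_{\mu\mu}$, $g^W_{\mu\sigma}$, $g^W_{\sigma\sigma}$ directly from the quadratic form \eqref{eq:am6920200225}, using the single structural fact that for a location-scale family the cumulative distribution function factors as $P(x,{\bm{\theta}}) = F\!\left(\frac{x-\mu}{\sigma}\right)$. The key preliminary observation is that the inner integral in \eqref{eq:am6920200225} is an exact differential of the CDF: for a perturbation $\delta p = \partial_\mu p\, d\mu + \partial_\sigma p\, d\sigma$ one has $\int_{-\infty}^{x}\delta p(y)\,dy = \partial_\mu P(x,{\bm{\theta}})\,d\mu + \partial_\sigma P(x,{\bm{\theta}})\,d\sigma$, so the metric entries are $g^W_{ij} = \int_{-\infty}^{\infty} p(x,{\bm{\theta}})^{-1}\,\partial_i P(x,{\bm{\theta}})\,\partial_j P(x,{\bm{\theta}})\,dx$.

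Carrying this out, I would differentiate $P(x,{\bm{\theta}}) = F((x-\mu)/\sigma)$ and write $z = (x-\mu)/\sigma$, obtaining $\partial_\mu P = -\tfrac{1}{\sigma} f(z)$ and $\partial_\sigma P = -\tfrac{z}{\sigma} f(z)$. Substituting these, together with $p(x,{\bm{\theta}}) = \tfrac{1}{\sigma} f(z)$, and changing variables via $x = \sigma z + \mu$ (so $dx = \sigma\,dz$), all the factors of $\sigma$ and of $f(z)$ cancel and one is left with
\[
g^W_{\mu\mu} = \int_{-\infty}^{\infty} f(z)\,dz, \qquad g^W_{\mu\sigma} = \int_{-\infty}^{\infty} z f(z)\,dz, \qquad g^W_{\sigma\sigma} = \int_{-\infty}^{\infty} z^2 f(z)\,dz.
\]
These equal $1$, $0$, and $1$ respectively, precisely by the three normalizing conditions imposed on the standard density $f$ (total mass one, mean zero, variance one). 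Hence $g^W_{ij} = \delta_{ij}$ in the $(\mu,\sigma)$ chart, and since this chart covers the whole model with a constant metric, $S$ is (an open subset of) Euclidean space.

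It is worth recording the conceptual reason as well: by \eqref{eq:am420200228}, the map $p \mapsto P^{-1}$ embeds $M$ isometrically into $L^2([0,1])$ equipped with its flat $L^2$ metric. Under this embedding the location-scale model is sent to the affine plane $\{\mu\,\mathbf{1} + \sigma\,F^{-1} : \mu \in {\bm{R}},\ \sigma > 0\}$, because the quantile function of $\sigma z + \mu$ is $\mu + \sigma F^{-1}$; the tangent vectors $\partial_\mu$ and $\partial_\sigma$ are then the constant function $\mathbf{1}$ and the function $F^{-1}$, and the three moment conditions on $f$ say exactly that $\mathbf{1}$ and $F^{-1}$ are orthonormal in $L^2([0,1])$. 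This gives $g^W_{ij} = \delta_{ij}$ immediately and explains why the answer is independent of $f$.

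The computation is routine; the only points requiring care are regularity conditions. One needs $f$ positive and smooth enough on its support for \eqref{eq:am6920200225} to be valid and for differentiation under the integral sign, and the finiteness of $\int z^2 f(z)\,dz$ (already a standing assumption) is what keeps the metric finite. The one genuinely delicate point is compactly supported $f$, such as the uniform density: there $p(x)^{-1}$ in \eqref{eq:am6920200225} must be read as an integral over $\mathrm{supp}\,p$, and one should check that admissible location-scale perturbations keep $\int_{-\infty}^{x}\delta p$ appropriately supported. This is a boundary/measure-theoretic matter and does not affect the value $\delta_{ij}$; I would mention it but not dwell on it.
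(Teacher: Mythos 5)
Your computation is correct and is essentially the paper's own proof: the paper likewise reduces the inner integral in \eqref{eq:am6920200225} to $\int_{-\infty}^{x}\delta p\,dy=-p(x,{\bm{\theta}})\,d\mu-\frac{x-\mu}{\sigma}p(x,{\bm{\theta}})\,d\sigma$ (which is exactly your $\partial_\mu P\,d\mu+\partial_\sigma P\,d\sigma$) and then invokes the three moment conditions on $f$ to obtain $C({\bm{\theta}},{\bm{\theta}}+d{\bm{\theta}})=d\mu^2+d\sigma^2$. Your supplementary quantile-embedding observation, that $p\mapsto P^{-1}$ is an isometry into $L^2([0,1])$ sending the model to the affine plane spanned by $\mathbf{1}$ and $F^{-1}$, is a correct and illuminating alternative view, but the core argument coincides with the paper's.
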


\begin{proof}\upshape
	We need to calculate (\ref{eq:am6920200225}).  
	We have
	\begin{align*}
	\delta p(x, {\bm{\theta}}) = 
	-\frac 1{\sigma^2} f' \left( 
	\frac{x-\mu}{\sigma} \right) d \mu
	-\frac 1{\sigma^3} \left\{
	\sigma f \left( \frac{x-\mu}{\sigma} \right)
	+ (x-\mu) f' \left( \frac{x-\mu}{\sigma} \right)
	\right\} d \sigma.
	\end{align*}
	Integration gives
	\begin{align*}
	\int^{x}_{-\infty} \delta p
	(y, {\bm{\theta}}) dy =
	-p(x, {\bm{\theta}}) d \mu -
	\frac{1}{\sigma} (x-\mu) p(x, {\bm{\theta}}) d \sigma.
	\end{align*}
	Hence, we have
	\begin{align*}
	C({\bm{\theta}}, {\bm{\theta}}+ d{\bm{\theta}})
	= d \mu^2 + d \sigma^2.
	\end{align*}

\end{proof}
It is surprising that $G=\left(g_{ij}\right)$ is the identity matrix for the location-scale model, so $S$ is a Euclidean space.  See also \citet{LZ2019}.  It is flat by itself, but $S$ is a curved submanifold in $M$ \citep{Takatsu2011}, like a cylinder embedded in ${\bm{R}}^3$.

When $n$ is large, the cost decreases on the order of $1/n$.  The $W$-estimator is the projection of $\hat{p}(x)$ to $S$ in the tangent space of $M$.  Let $\hat{{\bm{\theta}}}'$ be another consistent estimator. Accordingly, we have the Pythagorean relation
\begin{align*}
C \left(\hat{p}, p_{\hat{{\bm{\theta}}}'} \right)
= C \left(\hat{p}, p_{\hat{{\bm{\theta}}}} \right)
+ C \left(p_{\hat{{\bm{\theta}}}}, p_{\hat{{\bm{\theta}}}'} \right),
\end{align*}
and the difference of the cost between the two estimators is 
\begin{align*}
C \left( p_{\hat{{\bm{\theta}}}}, p_{\hat{{\bm{\theta}}}'} \right)
= \frac{1}{n} \left|\hat{\bm{\theta}}- \hat{\bm{\theta}}' \right|^2.
\end{align*}

\citet*{LZ2019} studied the properties of a $W$-estimator given by the $W$ score function. They gave the $W$-efficiency and $W$-Cramer-Rao inequality.  However, their $W$-estimator does not minimize the transportation cost.  

\section{Maximum likelihood estimator and $W$-divergence}
It is an interesting problem to study the estimator that minimizes the transportation cost from the true distribution to the estimated one. 
Let $\hat{{\bm{\theta}}}$ be a consistent estimator and let ${\bm{e}}=\hat{{\bm{\theta}}}-{\bm{\theta}}_0$ be the estimation error vector, where ${\bm{\theta}}_0$ is the true parameter. 
We want to study the minimizer of $C(p_{{\bm{\theta}}_0},p_{\hat{{\bm{\theta}}}})$. 
Since the W-metric $g$ is the identity matrix for the location scale model, for the covariance $V=E[(\hat{{\bm{\theta}}}-{\bm{\theta}}_0)(\hat{{\bm{\theta}}}-{\bm{\theta}}_0)^{\top}]$ of the estimation error, we have
\[
C = {\rm tr} V.
\]
Therefore, the covariance is minimized when the expectations of the sum of the squares of the location error and scale error are at a minimum in the location scale case. Furthermore, we have a more general result.

\begin{theorem}
	The transportation cost is asymptotically minimized  by the maximum likelihood estimator for a general statistical model.
\end{theorem}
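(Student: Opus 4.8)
The plan is to reduce the problem to a finite-dimensional quadratic optimisation by means of the local structure of the $W$-divergence established in Section~5, and then to invoke the Cramer--Rao bound together with the elementary trace inequality for positive semidefinite matrices. Throughout, $\hat{{\bm{\theta}}}$ ranges over the regular (asymptotically unbiased) consistent estimators, and ${\bm{\theta}}_0$ is the true parameter.

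First I would Taylor-expand $C(p_{{\bm{\theta}}_0},p_{\hat{{\bm{\theta}}}})$ around ${\bm{\theta}}_0$. Writing ${\bm{e}}=\hat{{\bm{\theta}}}-{\bm{\theta}}_0$, the formula (\ref{eq:am6920200225}), which holds for a general model, shows that the $W$-divergence between $p_{{\bm{\theta}}_0}$ and a nearby $p_{{\bm{\theta}}_0+{\bm{e}}}$ is a pure quadratic form in ${\bm{e}}$ with no linear term (a divergence vanishes to second order on the diagonal):
\begin{align*}
C(p_{{\bm{\theta}}_0},p_{{\bm{\theta}}_0+{\bm{e}}}) = \sum_{i,j} g^W_{ij}({\bm{\theta}}_0)\, e_i e_j + o(|{\bm{e}}|^2),
\end{align*}
where $G^W=(g^W_{ij})$ is the Wasserstein metric tensor of the model, manifestly positive semidefinite by (\ref{eq:am6920200225}). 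Since $\hat{{\bm{\theta}}}$ is $\sqrt{n}$-consistent, ${\bm{e}}=O_p(n^{-1/2})$, so $n\,C(p_{{\bm{\theta}}_0},p_{\hat{{\bm{\theta}}}})=\sum_{i,j} g^W_{ij}({\bm{\theta}}_0)(\sqrt{n}\,e_i)(\sqrt{n}\,e_j)+o_p(1)$. Taking expectations and letting $\bar{V}=\lim_{n\to\infty} n\,E[{\bm{e}}{\bm{e}}^{\top}]$ denote the asymptotic covariance of the estimator, I get
\begin{align*}
\lim_{n\to\infty} n\, E\!\left[ C(p_{{\bm{\theta}}_0},p_{\hat{{\bm{\theta}}}}) \right] = \mathrm{tr}\!\left( G^W({\bm{\theta}}_0)\, \bar{V} \right).
\end{align*}

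Now I would compare estimators. For the maximum likelihood estimator, $\bar{V}=G_F({\bm{\theta}}_0)^{-1}$, the inverse Fisher information; for any regular consistent estimator the Cramer--Rao bound in its asymptotic form (the Hajek convolution theorem) gives $\bar{V}\succeq G_F({\bm{\theta}}_0)^{-1}$ in the Loewner order. Since $G^W\succeq 0$ and $\bar{V}-G_F^{-1}\succeq 0$, the trace inequality $\mathrm{tr}(AB)=\mathrm{tr}(A^{1/2}BA^{1/2})\ge 0$ for positive semidefinite $A,B$ yields
\begin{align*}
\lim_{n\to\infty} n\, E\!\left[ C(p_{{\bm{\theta}}_0},p_{\hat{{\bm{\theta}}}}) \right] = \mathrm{tr}\!\left( G^W \bar{V} \right) \ge \mathrm{tr}\!\left( G^W G_F^{-1} \right),
\end{align*}
with equality attained by the MLE. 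This proves that the MLE asymptotically minimizes the transportation cost $C(p_{{\bm{\theta}}_0},p_{\hat{{\bm{\theta}}}})$.

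The main obstacle is the analytic justification of exchanging limit and expectation: going from the distributional convergence $n\,C(p_{{\bm{\theta}}_0},p_{\hat{{\bm{\theta}}}}) \Rightarrow \sum_{i,j} g^W_{ij}({\bm{\theta}}_0) Z_i Z_j$ with $(Z_i)\sim N(0,\bar{V})$ to convergence of the expectations requires uniform integrability, which in turn rests on tail/moment control of $\hat{{\bm{\theta}}}$ and on a quantitative bound for the remainder in the quadratic expansion of the $W$-divergence. One must also restrict to the class of regular estimators for which the convolution-theorem form of the Cramer--Rao inequality applies, so as to exclude the usual superefficiency pathologies. By contrast, the algebraic heart of the argument — the trace inequality for positive semidefinite matrices — is entirely elementary.
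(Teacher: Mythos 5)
Your proposal is correct and follows essentially the same route as the paper's own proof: expand the cost to second order so that asymptotically $n\,C(p_{{\bm{\theta}}_0},p_{\hat{{\bm{\theta}}}})$ becomes ${\rm tr}(G^W V)$ scaled appropriately, apply the Cramer--Rao bound $V \succeq \frac{1}{n}G_F^{-1}$, and conclude via the trace monotonicity ${\rm tr}(G^W A)\ge {\rm tr}(G^W B)$ for $A\succeq B$, with equality attained asymptotically by the MLE. Your added remarks on restricting to regular estimators and on the uniform-integrability step are sensible refinements of details the paper leaves implicit, but they do not change the argument.
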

\begin{proof}
	The error covariance $V$ satisfies the Cramer--Rao inequality
	\[
	V \succeq \frac{1}{n} G_F^{-1}
	\]
	in the sense of the matrix positive-definiteness, where $G_F$ is the Fisher information matrix.
	The minimum is attained asymptotically by the MLE. 
	On the other hand, when $A \succeq B$ for two positive-definite matrices $A$ and $B$,
	\[
	{\rm tr} (G^W A) \geq {\rm tr} (G^W B).
	\]
	Since the transportation cost is asymptotically written as
	\[
	C = {\rm tr} (G^W V) \geq \frac{1}{n} {\rm tr} (G^W G_F^{-1}),
	\]
	it is minimized for the maximum likelihood estimator that asymptotically attains $V=G_F^{-1}/n$.
\end{proof}

It would be interesting to analyze the transportation cost of the $W$-estimator in general.

\section{Discussion}

There are three estimators, the MLE, $W$-score estimator and $W$-estimator. They have their own optimal properties and related behaviors. The MLE minimizes the KL divergence from the empirical distribution to the estimated distribution in the model. It minimizes the KL divergence and the $W$-divergence (transportation cost) from the true distribution to the estimated model at the same time. The $W$-estimator minimizes the transportation cost from the empirical distribution to the estimated distribution. However, it does not necessarily minimize the cost from the true distribution to the estimated one. The $W$-score estimator minimizes the integrated W-score function which is not the transportation cost. Further studies should be conducted on the merits and demerits of these estimators and their applicability to various problems.


\begin{thebibliography}{}
%
\bibitem[Amari(2016)]{Amari2016} 
Amari, S. (2016). 
\textit{Information Geometry and Its Applications}. 
Springer.
%
\bibitem[Amari et al.(2018)]{AKO2018} 
Amari, S., Karakida, R. \& Oizumi, M. (2018).
Information geometry connecting Wasserstein distance and Kullback--Leibler divergence via the entropy-relaxed transportation problem. 
\textit{Information Geometry}, \textbf{1}, 13--37.
%
\bibitem[Amari et al.(2019)]{AKOC2019} 
Amari, S., Karakida, R., Oizumi, M. \& Cuturi, M. (2019).
Information geometry for regularized optimal transport and barycenters of patterns. 
\textit{Neural Computation}, \textbf{31}, 827--848.
%
\bibitem[Arjovsky et al.(2017)]{ACB2917} 
Arjovsky, M., Chintala, S. \& Bottou, L. (2017). 
Wasserstein GAN. 
arXiv:1701.07875.
%
\bibitem[Bernton et al.(2019)]{BJGR2019} 
Bernton, E., Jacob, P. E., Gerber, M. \& Robert, C. P. (2019).
On parameter estimation with the Wasserstein distance. 
\textit{Information and Inference: A Journal of the IMA}, \textbf{8}, 657--676.
%
\bibitem[Bassetti et al.(2006)]{BBR2006} 
Bassetti, F., Bodini, A. \& Regazzini, E. (2006).
On minimum Kantorovich distance estimators. 
\textit{Statistics \& Probability Letters}, \textbf{76}, 1298--1302.
%
\bibitem[Fronger et al.(2015)]{FZMAP2015} 
Fronger, C., Zhang, C., Mobahi, H., Araya-Polo, M. \& Poggio, T. (2015).
Learning with a Wasserstein loss. 
Advances in Neural Information Processing Systems 28 (NIPS 2015).
%
\bibitem[Kurose et al.(2019)]{KYA2019} 
Kurose, T., Yoshizawa, S. \& Amari, S. (2019). 
Optimal transportation plan with generalized entropy regularization. 
submitted.
%
\bibitem[Li et al.(2020)]{LM2018} 
Li, W. \& Mont\'{u}far, G. (2020).
Ricci curvature for parametric statistics via optimal transport. 	
\textit{Information Geometry}, \textbf{3}, 89-–117.
%
\bibitem[Li et al.(2019)]{LZ2019} 
Li, W. \& Zhao, J. (2019).
Wasserstein information matrix. 
arXiv:1910.11248.
%
\bibitem[Matsuda and Strawderman(2021)]{Matsuda2021} 
Matsuda, T. \& Strawderman, W. E. (2021).
Predictive density estimation under the Wasserstein loss. 
\textit{Journal of Statistical Planning and Inference}, \textbf{210}, 53--63.
%
\bibitem[Montavon et al.(2015)]{MMC2015} 
Montavon, G., M\"{u}ller, K. R. \& Cuturi, M. (2015). 
Wasserstein training for Boltzmann machine. 
Advances in Neural Information Processing Systems 29 (NIPS 2016).
%
\bibitem[Peyr\'{e} and Cuturi(2019)]{PC2019} 
Peyr\'{e}, G. \& Cuturi, M. (2019).
Computational optimal transport: With Applications to Data Science.
\textit{Foundations and Trends{\textregistered} in Machine Learning}, \textbf{11}, 355--607.
%
\bibitem[Santambrogio(2015)]{Santambrogio2015} 
Santambrogio, F. (2015).
\textit{Optimal Transport for Applied Mathematicians}. 
Springer.
%
\bibitem[Takatsu(2011)]{Takatsu2011} 
Takatsu, A. (2011). 
Wasserstein geometry of Gaussian measures. 
\textit{Osaka Journal of Mathematics}, \textbf{48}, 1005--1026.
%
\bibitem[van der Vaart(1998)]{vV} 
van der Vaart, A. W. (1998). 
\textit{Asymptotic Statistics}. 
Cambridge University Press.
%
\bibitem[Villani(2003)]{Villani2003} 
Villani, C. (2003). 
\textit{Topics in Optimal Transportation}. 
American Mathematical Society.
%
\bibitem[Villani(2009)]{Villani2009} 
Villani, C. (2009). 
\textit{Optimal Transport: Old and New}. 
Springer.
%
\bibitem[Wang and Li(2020)]{WL2019} 
Wang, Y. \& Li, W. (2020).
Information Newton’s flow: Second-order optimization method in probability space.
arXiv:2001.04341.
\end{thebibliography}
\end{document}